\newtheoremstyle{thm}{1.5ex}{1.5ex}{\itshape\rmfamily}{}
{\bfseries\rmfamily}{}{2ex}{}
\newtheoremstyle{rem}{1.3ex}{1.3ex}{\rmfamily}{} 
{\itshape}
{} {1.5ex}{}
\newtheoremstyle{theorem}{1.5ex}{1.5ex}{\itshape\rmfamily}{} {\bfseries\rmfamily}{}{2ex}{}
\newtheoremstyle{def}{1.5ex}{1.5ex}{\slshape\rmfamily}{} {\bfseries\rmfamily}{}{2ex}{}
\newtheoremstyle{rem}{1.3ex}{1.3ex}{\rmfamily}{} {\itshape}
{} {1.5ex}{}
\theoremstyle{theorem}
\newtheorem{theorem}{Theorem}[section]
\newtheorem{lemma}[theorem]{Lemma}
\newtheorem*{Main Theorem}{Main Theorem.}
\newtheorem{corollary}[theorem]{Corollary}
\theoremstyle{def}
\theoremstyle{rem}
\newtheorem{remark}{{\itshape Remark}}[]
\numberwithin{equation}{section}
\begin{document}

\title
{\Large{Large Scale Properties of the IIIC for 2D Percolation}}

\author
{\large L. Chayes$^{1}$, P. Nolin$^{2,3}$}
\date{}
\maketitle

\vspace{-4mm}

\centerline{${}^1$\textit{Department of Mathematics, UCLA, Los Angeles,
CA 90059--1555 USA }}
\centerline{${}^2$\textit{D\'epartement de Math\'ematiques et Applications, ENS, 75230 Paris cedex 05, France }}
\centerline{${}^3$\textit{Laboratoire de Math\'ematiques, Universit\'e Paris--Sud, 91405 Orsay cedex, France }}

\begin{quote}
{\footnotesize
{\bf Abstract:}
We reinvestigate the 2D problem of the {\it inhomogeneous incipient infinite cluster} where, in an independent percolation model, the density decays to $p_c$ with an inverse power, $\lambda$, of the distance to the origin.  Assuming the existence of critical exponents (as is known in the case of the triangular site lattice) if the power is less than $1/\nu$, with $\nu$ the correlation length exponent, we demonstrate an infinite cluster with scale dimension given by $D_H = 2 - \beta\lambda$.  Further, we investigate the critical case $\lambda_c = 1/\nu$ and show that {\it iterated} logarithmic corrections will tip the balance between the possibility and impossibility of an infinite cluster.
}
\end{quote}

\section{Introduction}
A while ago, one of us -- in collaboration with others -- introduced a notion of
{\it inhomogeneous percolation}  \cite{CCD} that was demonstrated to have some interesting properties.  The model is defined by allowing the density parameter to vary, e.g.~with the distance to the origin, in such a way that the system will just barely house an infinite cluster.  Explicitly, one looks at
\begin{equation}
p(r) \cong  p_c + \frac{1}{r^{\lambda} }
\end{equation}
where $r$ denotes distance to the origin (and it should be assumed $r$ is large enough so that the right--hand side makes sense).  
For $d = 2$, under the assumption of the existence of critical exponents, it was found that if $\lambda < \lambda_c = 1/\nu$ the origin belongs to an infinite cluster with positive probability, while this probability vanishes if $\lambda > \lambda_{c}$.  In the preceding, $\nu$ is the correlation length exponent -- precise definitions later -- and, in fact, an equivalent but more awkward statement can be made without reference to exponents.  For $\lambda < \lambda_c$, we will refer to the infinite object as the {\it inhomogeneous incipient infinite cluster} (IIIC).  

In the ensuing time, there have been two landmarks in 2D percolation, namely the works of Kesten in the late 1980's (\cite{Ke2}, \cite{Ke3}, \cite{Ke4}) wherein critical scaling relations were established modulo the existence of certain critical exponents, and the more recent works by (various combinations of) Lawler, Schramm, Smirnov and Werner (\cite{S}, \cite{LSW4}, \cite{SmW}) where the existence of these exponents -- and their values -- was established for the case of the triangular site lattice using the connection, in the scaling limit, to the Schramm-Loewner Evolution (SLE) with parameter 6. Thus, most of the original results can be sharpened at least in certain cases. However, such matters are largely automatic.  

The main result of this note concerns the large scale structure of the percolating cluster. In particular, it turns out that these objects have a well--defined Hausdorff dimension (more precisely scale--dimension) that is given by 
\begin{equation}
D_H  =  2 - \beta\lambda 
\end{equation}
for $0 < \lambda < \lambda_c$, where $\beta$ is the percolation density, or order parameter exponent.  It is noted that as $\lambda \downarrow \lambda_c$ this dimension matches that of the standard IIC as discussed in e.g.~\cite{StanleyI}, \cite{StanleyII} and proved, modulo the existence of exponents, in \cite{Ke2}.   
Further, we discuss the borderline case, informally $p(r) - p_c = r^{-1/\nu}K(r)$ where $K(r)$ is a ``correction''.  It turns out that at the border, the balance is {\it very} delicate and 
\begin{equation}
\label{DD}
K(r) \sim [\log \log r]^{1/\nu} 
\end{equation}
 will determine the presence or absence of infinite structures.  
 All results save the latter can be stated without apology for the triangular site model; a statement along the lines of Eq.(\ref{DD}) requires {\it strong} existence of power laws which, at this time, has not been established, and we will be content with a statement that circumvents this necessity. 
\section{Setup and Statement of Theorems}

\subsection{Setting}

We consider any of the standard 2D percolation models -- explicitly any model for which the results of \cite{Ke2} -- \cite{Ke4} can be established.  
In particular, what is needed is reflection symmetry about one of the coordinate axes and overall rotational invariance by any angle in $(0,\pi)$.  However, it is sufficient that the reader keeps in mind only the bond or site problems on the square or triangular lattice (unfortunately, the latter requires the use of parallelograms rather than rectangles and, since the triangular site model is where the strongest results are known, we are forced to carry this terminology).

For the purposes of this note, it is assumed that the reader is familiar with the standard fare associated with these sorts of percolation problems; additional background material can be found in the reference \cite{Gr}.

Let us now fix some working notation/definitions: we take the vertical axis to be the axis of reflection symmetry and $r(z)= \|z\|_{\infty}$ (abv.~$\|z\|$ since, in any case, all norms are equivalent) will denote the infinity norm of a site $z$ as measured with respect to the $x$--axis and the axis related to this by the angle of rotation symmetry. The set of points at distance at most $N$ from a site $z$ is a rhombus centered at this site and whose sides line up with the above mentioned axes. It will be denoted by $S_N(z)$, its boundary being the set $\partial S_N(z)$ of points at distance exactly $N$ from $z$. We will refer to $S_N(0)$ simply as $S_N$. We will often use the fact that
\begin{equation}
|S_N(z)| \leq C_0 N^2
\end{equation}
for some constant $C_0$ that may depend on the lattice.

Bonds or sites (as appropriate) will be {\it occupied} with probability $p$ and {\it vacant} with probability $1-p$, independently of each other. We denote by $P_\infty (p)$ the probability that the site at the origin is connected to infinity, and by $p_c \in (0,1)$ the percolation threshold:  $P_\infty (p) > 0$ {\it iff} $p>p_c$. 
If $A$ and $B$ are sets (which, for convenience, will include the case ``infinity''), then we use the notation 
$A \leadsto B$
to denote the event that some site in $A$ is connected to some site in $B$.  If the connection is required to take place using exclusively the sites of some other set $C$, we write
$A \negthinspace \mathop{_{\ ^{\leadsto}\ }^{~C}} \negthinspace B$.  
Finally, all quantities adorned by an $\ast$ will pertain to the {\it dual model}.

We will make use of the following one--arm probability
\begin{equation}
\pi(N) := \mathbb{P}_{p_c}(\{0 \leadsto \partial S_N\})
\end{equation}
and, in addition,
\begin{equation}
\pi(n | N) := \mathbb{P}_{p_c}(\{\partial S_n \leadsto \partial S_N\}).
\end{equation}
The so-called Russo-Seymour-Welsh theory (see e.g. \cite {Gr}) implies that
\begin{equation}
\label{E}
\pi(n|2N), \pi(\lfloor n/2 \rfloor|N) \geq D_1 \pi(n|N)
\end{equation}
and
\begin{equation}
\label{alpha}
D_2  \Bigg[ \frac{n}{N }\Bigg]^{\mu} \leq \pi(n | N) \leq D_3 \Bigg[ \frac{n}{N }\Bigg]^{\mu'}
\end{equation}
for some constants $0 < D_1, D_2, D_3, \mu, \mu' < \infty$. We will later have use for $\mu < 2$ so we may as well take $\mu = \frac12$ (this may be derived by a variant of the ``example'' (3.15) in \cite{KvdB} where one now uses blocks of size $n$ instead of individual sites to obtain that $\frac Nn\pi^2(n|N)$ is bounded below by a constant).
Finally, we also have
\begin{equation}
\label{F}
D_4 \pi(n_0 | n_2) \leq \pi(n_0 | n_1) \times \pi(n_1 | n_2) \leq D_5 \pi(n_0 | n_2)
\end{equation}
whenever $n_0 < n_1 < n_2$, for some $0 < D_4, D_5 < \infty$.

\subsection{Correlation Lengths}

We will assume throughout that $p > p_c$, as this is the only case we are interested in. The primary correlation length used in this note, describing connection probabilities, will be defined via the dual model: let $z^\ast$ denote a site on the dual lattice and let $\tau_{0^\ast\negthinspace,z^\ast}^\ast (p)$ denote the probability of a dual connection between the dual origin and $z^\ast$, i.e.~the event $\{ 0^\ast \negthinspace \mathop{_{\ ^{\negthinspace\leadsto}\ }^{~\ast}} \negthinspace z^\ast \}$.
Finally, let $\tau^{\ast}_n(p)$ denote the maximum of such connection probabilities with $\|z^\ast\|$ ($= \|z^\ast\|_\infty$) within a lattice spacing of $n$.  Then, the correlation length $\xi(p)$ is defined by
\begin{equation}
 \label{}
\lim_{n\to\infty}[\tau^{\ast}_n(p)]^{\frac{1}{n}} = e^{-\frac{1}{\xi(p)}}
\end{equation}
with $\xi = 0$ if $p = 1$. As is well known, the function $\xi$ is continuous, monotone and divergent at $p = p_c$; the power of $p - p_c$ with which this function purportedly diverges ``defines'' the exponent $\nu$.
Further, for future reference, the functions 
$\tau_{0^\ast\negthinspace,z^\ast}^\ast$ obey the {\it a priori} bounds 
\begin{equation}
\label{apb}
\tau_{0^\ast\negthinspace,z^\ast}^\ast (p) \leq  
e^{-\frac{\|z^{\ast}\|}{\xi(p)}}.
\end{equation}

Another frequently used correlation length is the (quadratic) mean radius $\tilde{\xi}(p)$ of a finite cluster, defined by
\begin{equation}
\tilde{\xi}(p) = \Bigg[ \frac{1}{\mathbb{E}_p \big[|C(0)| ; |C(0)| < \infty \big]} \sum_z \|z\|^2 \mathbb{P}_p \big(\text{$\{0 \leadsto z\}$ and $|C(0)| < \infty$}\big) \Bigg]^{1/2}.
\end{equation}
We shall also have use for an auxiliary correlation length -- often called finite--size correlation length -- which we will denote by $L(p)$; technically this depends on an additional parameter $\delta$ which will be notationally suppressed.  In this note, the length $L(p)$ will be defined as the smallest $3\times1$ parallelogram -- with the short angle being the angle of the rotation symmetry -- such that the probability of an occupied crossing exceeds $1 - c\delta$.  Here $c$ is a particular constant of order unity and $\delta$ may be chosen arbitrarily in $(0,1)
$\footnote{Kesten proved in fact the following in \cite{Ke4}: for any (fixed) $\delta_1, \delta_2 \in (0,1)$, we have $L(p,\delta_1) \asymp L(p,\delta_2)$.}.
The key item is that if the above mentioned  estimate on the crossing probability is satisfied then, upon tripling the length scale, the improved estimate becomes $1-c\delta^{3}$, so that on further rescalings, crossing probabilities tend to one exponentially fast.  In particular, for all $n$, all $p > p_c$, the probability of a dual crossing of an $n\times 3n$ parallelogram is bounded above by a constant times $e^{-n/L(p)}$, which implies that
\begin{equation}
\label{theta}
P_{\infty}(p) \geq c_0 \: \mathbb P_{p}(\{ 0 \leadsto \partial S_{L(p)} \})
\end{equation}
for some universal constant $0 < c_0 < \infty$.

It is noted that for length scales smaller than $L(p)$, crossing probabilities of these shorter and longer parallelograms are bounded above and below by strictly positive constants that depend only on the aspect ratio, as this is the situation at $p = p_c$ on {\it all} length scales.  This is proved by a variant of the Russo--Seymour--Welsh theorem, see e.g.~the relevant lemmas in \cite{Ke0} Ch.~6.  Obviously, the same kind of bounds hold for dual crossings.

It was shown in \cite{Ke4} that $L(p)$ and $\tilde{\xi}(p)$ are uniformly bounded above and below by ($\delta$ dependent) multiples of one another, that is, in the notation of Kesten,
$$
L(p) \asymp \tilde{\xi}(p).
$$
It was mentioned in \cite{BCKS} that the relation $\xi(p) \asymp L(p)$ was known; however to the authors' knowledge, there is no published proof.  In any case, at least for 2D percolation problems, it is not hard to show it -- we will provide the details in a short appendix -- thus all these correlation lengths are equivalent. To define the model we have a slight preference for $\xi$, which is continuous and monotone, but for proofs the length $L$ will most often be more practical.

Finally, as alluded to above, concerning asymptotic issues, we will use Kesten's notations: for two positive functions $f$ and $g$, $f \asymp g$ means that there exist two positive and finite constants $C_a$ and $C_b$ such that $C_a g \leq f \leq C_b g$ (so that their ratio is bounded away from $0$ and $+ \infty$), whereas $f \approx g$ means that ${\log f} / {\log g} \to 1$ (``logarithmic equivalence'').  These items will refer to $p \to p_c$ or $N \to \infty$ depending on the context.

Kesten proved in \cite{Ke4} than the one--arm probability stays of the same order of magnitude if we do not go beyond the characteristic scale: more precisely,
\begin{equation}
\label{onearm}
\mathbb{P}_p(0 \leadsto \partial S_n) \asymp \mathbb{P}_{p_c}(0 \leadsto \partial S_n)
\end{equation}
uniformly in $p$ and $n \leq L(p)$. In particular, we can combine it with Eq.(\ref{theta}):
\begin{equation}
\label{KClassic}
P_{\infty}(p)  \asymp \mathbb P_{p}(\{ 0 \leadsto \partial S_{L(p)} \})
\asymp \mathbb P_{p_c}(\{ 0 \leadsto \partial S_{L(p)} \}).
\end{equation}
This result (stated also in \cite{Ke4}) will prove to be very useful when dealing with small boxes on which the parameter does not vary too much.

\subsection{Description of the Model}

We let $\alpha: [0,+\infty) \to (0,1-p_c]$ denote the inverse function of $\xi$ with argument of the increment above threshold:
\begin{equation}
\label{r}
\xi(p_c + \alpha(r))  =  r.
\end{equation}
Letting $w \in (0,1)$, our inhomogeneous density will be defined by
\begin{equation}
\label{p(r)}
p(\negthinspace z) := p_c + \varepsilon(r) = p_c + \alpha(r^w),
\end{equation}
still with $r=r(z)=\|z\|$. It is noted that this gives $\xi(p_c + \varepsilon(r))  = r^w$ which will be the starting point of our analyses.  We will denote the corresponding measure by 
$\tilde{\mathbb P}_w$ and expectations therein by $\tilde{\mathbb E}_w$.

\begin{remark} It is noted that the formulation in Eq.(\ref{p(r)}) has the slight advantage over the informal description featured in the introduction that it is well-defined at \emph{all} points of the lattice.  Moreover, in cases such as the triangular site percolation model where a logarithmic form of scaling can be established, i.e.
\begin{equation}
\label{nu}
\xi(p) \approx |p-p_c|^{-\nu} \quad (p\downarrow p_c)
\end{equation}
we make direct contact with the more informal description. Indeed using Eqs.(\ref{nu}), (\ref{p(r)}) and (\ref{r}) we get
\begin{equation}
\nu = \lim_{r\to\infty} \frac{\log(\xi(p_c + \alpha(r^w))}{|\log\alpha(r^w)|}
\frac{\log r}{\log r } = w\lim_{r\to\infty}\frac{\log r}{|\log \varepsilon(r)| }
\end{equation}
i.e.~$\log \varepsilon(r)/ \log r \to -w/\nu = -\lambda$, that is to say $\varepsilon(r) \approx r^{-\lambda}$.
\end{remark}

We will now consider the inhomogeneous model as described, and we will denote by $\Psi_N$ the number of sites in $S_N$ that belong to the infinite cluster, and by $\Phi_N$ the number of sites in $S_N$ that are connected to the origin by a path lying entirely in $S_N$. We are ready for the statement of our main theorem:

\begin{theorem}
\label{main}
Consider the quantity
\begin{equation}
\label{I}
I_N := \sum_{z \in S_N} P_{\infty}\big(p_c + \varepsilon(r(z))\big).
\end{equation}
Then 
\begin{enumerate}[(i)]
\item We have $I_N \asymp N^2 \pi(N^w)$, and this quantity measures the size of $\Psi_N$ and $\Phi_N$: As $N \to \infty$,
\begin{equation}
\tilde{\mathbb E}_w(\Psi_N), \:\: \tilde{\mathbb E}_w(\Phi_N) \asymp I_N.
\end{equation}

\item Furthermore, we have the variance estimate: for any $\epsilon > 0$,
\begin{equation}
\tilde{\mathbb V}_w(\Psi_N) \leq C_{2\epsilon} N^{2\epsilon}N^{2+2w} \pi^2(N^w),
\end{equation}
so that $\tilde{\mathbb V}_w(\Psi_N) = o(I_N^2)$ and
\begin{equation}
\frac{\Psi_N}{\tilde{\mathbb E}_w(\Psi_N)} \longrightarrow 1 \quad \text{in $L^2$}.
\end{equation}
\end{enumerate}
Finally, conditionally on $\{0 \leadsto \infty\}$, these results hold for $\Phi_N$ as well.

\end{theorem}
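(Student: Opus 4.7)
The plan proceeds in three stages: (a) reduce $I_N$ to a sum of homogeneous one-arm probabilities; (b) show that $\tilde{\mathbb E}_w(\Psi_N)$ and $\tilde{\mathbb E}_w(\Phi_N)$ match $I_N$ via a local comparison to the homogeneous model at the inhomogeneous density; and (c) control $\tilde{\mathbb V}_w(\Psi_N)$ by bounding covariances through a van den Berg--Kesten-type decoupling. For (a), combining $\xi \asymp L$ with Eq.~(\ref{KClassic}) and the definition (\ref{p(r)}) yields $P_\infty(p_c+\varepsilon(r)) \asymp \pi(L(p_c+\varepsilon(r))) \asymp \pi(r^w)$. Grouping sites of $S_N$ by shells $r(z)=k$ of cardinality $\asymp k$, we obtain $I_N \asymp \sum_{k=1}^N k\,\pi(k^w)$. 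The lower bound $\gtrsim N^2 \pi(N^w)$ is immediate by restriction to $k \in [N/2,N]$ and monotonicity of $\pi$; for the matching upper bound, Eq.~(\ref{F}) combined with the power-law upper bound in (\ref{alpha}) gives $\pi(k^w) \lesssim \pi(N^w)(N/k)^{w\mu'}$, whence $I_N \lesssim \pi(N^w) N^{w\mu'} \sum_k k^{1-w\mu'} \asymp N^2 \pi(N^w)$ since $w\mu' < 2$.

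For (b), the key step is $\tilde{\mathbb P}_w(z \leadsto \infty) \asymp \pi(r(z)^w)$. Because $w<1$, within $S_{r(z)^w}(z)$ the density $p(\cdot)$ remains within a bounded multiplicative factor of $p(z)-p_c$ above $p_c$, so the common finite-size correlation length throughout this ball is $\asymp r(z)^w$, and Eq.~(\ref{onearm}) applied to this ball gives $\tilde{\mathbb P}_w(z \leadsto \partial S_{r(z)^w}(z)) \asymp \pi(r(z)^w)$, yielding the upper bound. For the lower bound, once $\partial S_{r(z)^w}(z)$ is reached, dyadic annuli around $z$ of outer radii $2^k r(z)^w$ have local correlation length $\lesssim r(z)^w$, hence each is crossed by an open path with probability $1-O(e^{-c 2^k})$, and the infinite product is bounded below by a positive constant, extending the connection to infinity. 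For $\tilde{\mathbb E}_w(\Phi_N)$ the same construction works with the crossings confined to $S_N$ by selecting the half-annulus facing the bulk, at the cost of another constant factor.

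For (c), write $d := \|z - z'\|$ and note that the boxes $S_{d/3}(z)$ and $S_{d/3}(z')$ are disjoint, so the events $\{z \leadsto \partial S_{d/3}(z)\}$ and $\{z' \leadsto \partial S_{d/3}(z')\}$ are independent, which gives $\tilde{\mathbb P}_w(z, z' \leadsto \infty) \lesssim \pi(d)^2$ for $d$ within the local correlation length. Combining with the trivial bound $\tilde{\mathbb P}_w(z \leadsto \infty) \asymp \pi(N^w)$ for the bulk of $z \in S_N$, and splitting at the crossover $d \asymp N^{w/2}$: close pairs ($d \leq N^{w/2}$) contribute at most $N^{2+w}\pi(N^w)$; mid pairs ($N^{w/2} \leq d \leq N^w$) contribute $\asymp N^{2+2w}\pi^2(N^w)$ by integrating $k\,\pi(k)^2$ using (\ref{alpha}); and far pairs ($d > N^w$) contribute $\lesssim N^{2+2w}\pi^2(N^w)$ via exponential decay of the covariance above the characteristic scale. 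The dominant mid-pair sum matches the claimed bound, the $N^{2\epsilon}$ slack absorbing the deviation between $\mu'$ and the true one-arm exponent, and Chebyshev's inequality then delivers the $L^2$ convergence. The conditional statement for $\Phi_N$ given $\{0 \leadsto \infty\}$ --- a positive-probability event under $\tilde{\mathbb P}_w$ by \cite{CCD} when $w<1$ --- follows because uniqueness of the infinite cluster (valid in our model since $p(z) > p_c$ strictly, while the dual density lies strictly below its own critical value) places every site of $\Psi_N$ almost surely in the cluster of the origin on this event, reducing the conditional claim to the unconditional one modulo the path-confinement step from (b). The main obstacle throughout is the variance bound in (c), where the BK-based decoupling must be patched together across the scales $d$, $r(z)^w$, and $N^w$, and the absence of sharp critical exponents forces the $N^{2\epsilon}$ slack.
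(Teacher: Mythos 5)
Your overall architecture --- reduce $I_N$ via Eq.(\ref{KClassic}) to $\sum_z \pi(r^w(z))$, localize $\{z\leadsto\infty\}$ to a one-arm event at the local correlation-length scale, and run a second-moment argument --- is the same as the paper's, and the treatment of $I_N$ and of the expectations is essentially right. Two caveats there. First, to get the \emph{upper} bound $\pi(k^w)\lesssim\pi(N^w)(N/k)^{w\mu}$ from Eq.(\ref{F}) you need the \emph{lower} bound in Eq.(\ref{alpha}) (a lower bound on $\pi(k^w\,|\,N^w)$), with the exponent $\mu$ which can be taken $<2$ --- not the upper bound with $\mu'$ as you wrote; this is precisely why the paper arranges $\mu=\tfrac12$. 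Second, your outward dyadic-annulus construction must account for the fact that the local correlation length \emph{grows} as one moves away from the origin; the paper's Lemma 3.1 handles this by first building a cluster of scale $2^w c_4 c_3^{-1}\ell(r)$, which dominates the correlation length throughout $S_{2r}\setminus S_r$, before rescaling outward.

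The genuine gap is in the variance bound, at the ``far pairs.'' Your crossover sits exactly at the characteristic scale $d\asymp N^w$, and for $d$ just above $N^w$ the covariance of $F_z$ and $F_{z'}$ is \emph{not} negligibly small: localizing each event to $\partial S_{d/3}(\cdot)$ costs an error $\eta(d)\asymp e^{-cd/N^w}$ (the probability of a blocking dual circuit beyond scale $d/3$), and the resulting covariance bound $\lesssim \pi(N^w)\eta(d)+\eta(d)^2$, summed over $z$ and over $d\in[N^w,\,N^w\log N]$, contributes of order $N^{2+2w}$ --- with \emph{no} factor $\pi^2(N^w)$ --- which exceeds $C_{2\epsilon}N^{2\epsilon}N^{2+2w}\pi^2(N^w)$ for small $\epsilon$ because $\pi(N^w)\to 0$ polynomially. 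This is exactly what the paper's \emph{enhanced} localization length $l(r)=\ell(r)r^{\epsilon}=r^{w+\epsilon}$ is for: with $F'_x=\{x\leadsto\partial S_l(x)\}$ one has $\tilde{\mathbb P}_w(F_x\Delta F'_x)\leq e^{-\|x\|^b}$, super-polynomially small, so the far pairs contribute nothing, while the near pairs $y\in S_{3l}(x)$ are what generate the $N^{2\epsilon}$ in the statement. Your attribution of the $N^{2\epsilon}$ to ``the deviation between $\mu'$ and the true one-arm exponent'' is therefore a misdiagnosis, and the far-pair step as written does not close. Secondary omissions: pairs with one site near the origin, where the localization error is not small, require the paper's separating-ring argument or at least an excised central box contributing $O(n^4)$; and the conditional variance of $\Phi_N$ cannot be reduced to the unconditional one by uniqueness alone --- the conditioning on $\{0\leadsto\infty\}$ must itself be decoupled from $F'_x$ and $F'_y$, which the paper does by replacing it with the event $F_0^{(x,y)}$ of a connection to infinity avoiding $S_l(x)\cup S_l(y)$.
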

\begin{remark}
Under the assumption of scaling, if we write
\begin{equation}
P_\infty(p) \approx (p-p_c)^{\beta}
\end{equation}
then
\begin{equation}
I_N  \asymp \int_{S_N}d^2r\frac{1}{r^{\lambda \beta}} 
\asymp N^{2-\lambda \beta}.
\end{equation}
A result along these lines can be stated for the triangular site model.
\end{remark}
\begin{corollary}
For the triangular site model (or any model where logarithmic scaling can be established), when $N \to \infty$,
\begin{equation}
\tilde{\mathbb E}_w(\Psi_N) \approx N^{2 - \lambda \beta}.
\end{equation}
\end{corollary}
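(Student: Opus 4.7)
The strategy is to combine the main theorem's estimate $\tilde{\mathbb E}_w(\Psi_N) \asymp N^2 \pi(N^w)$ with the assumed logarithmic scaling for $\xi$ and $P_\infty$; the latter yields a logarithmic power law for the one--arm probability $\pi$, which is then substituted.

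First I would derive $\pi(n) \approx n^{-\beta/\nu}$ from the scaling hypotheses. Eq.(\ref{KClassic}) gives $P_\infty(p) \asymp \pi(L(p))$, and the equivalence of correlation lengths established in Subsection 2.2 yields $L(p) \asymp \xi(p)$. Under the assumed logarithmic scaling, $\xi(p) \approx (p-p_c)^{-\nu}$ and $P_\infty(p) \approx (p-p_c)^{\beta}$. Setting $n = L(p)$ and inverting the scaling of $\xi$ gives $p - p_c \approx n^{-1/\nu}$, whence $\pi(n) \asymp P_\infty(p) \approx n^{-\beta/\nu}$, i.e.~$\log \pi(n)/\log n \to -\beta/\nu$ as $n \to \infty$.

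Substituting $n = N^w$ and using the identity $\lambda = w/\nu$ from the preceding remark, I obtain $\log \pi(N^w)/\log N \to -w\beta/\nu = -\lambda\beta$, hence $\log[N^2 \pi(N^w)]/\log N \to 2 - \lambda\beta$. Since Theorem \ref{main}(i) provides a multiplicative equivalence $\tilde{\mathbb E}_w(\Psi_N) \asymp N^2 \pi(N^w)$, the additive $O(1)$ discrepancy on the log scale is negligible provided the limiting exponent is nonzero; for the triangular site model, $\beta = 5/36$ and $\nu = 4/3$, so $\beta/\nu = 5/48$ and $2 - \lambda\beta > 0$ throughout $\lambda \in (0, 1/\nu)$. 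The conclusion $\tilde{\mathbb E}_w(\Psi_N) \approx N^{2-\lambda\beta}$ then follows immediately.

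The main (and only) obstacle is essentially bookkeeping: passing from the multiplicative $\asymp$ statements to the logarithmic $\approx$ statement. There is no substantive mathematical difficulty beyond what has already been absorbed into Theorem \ref{main} and the hypothesis of logarithmic scaling. One should simply keep track of the fact that $\approx$ is preserved under multiplication by a quantity bounded above and below by positive constants, and that the exponent $2 - \lambda\beta$ is positive so that $N^{2 - \lambda\beta}$ genuinely tends to infinity (otherwise the logarithmic equivalence would be vacuous).
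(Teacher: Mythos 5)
Your argument is correct and is essentially the paper's own route: the paper's preceding Remark evaluates $I_N \asymp \sum_z P_\infty(p_c+\varepsilon(r(z)))$ via $P_\infty(p)\approx(p-p_c)^\beta$ and $\varepsilon(r)\approx r^{-\lambda}$ to get $N^{2-\lambda\beta}$, while you equivalently feed the derived one--arm exponent $\pi(n)\approx n^{-\beta/\nu}$ into the form $I_N\asymp N^2\pi(N^w)$ from Theorem \ref{main}(i); these differ only in whether $P_\infty$ is converted to $\pi$ before or after summing over $z$. Your bookkeeping of $\asymp$ versus $\approx$ and the positivity of $2-\lambda\beta$ is exactly what is needed.
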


In the last section we will prove that if $\varepsilon (r) \approx \alpha(r/[\kappa \log\log r])$
there is a $\kappa_c$ above which there is percolation and below which there is not.  We will defer to Section 4 a precise statement of this result.

\section{Proofs}
The following, our key lemma, is an adaptation of the typical sorts of derivations to be found in \cite{CCF}, \cite{Ke3} and \cite{Ke4}.  
\begin{lemma}
Let $\ell(r)$ be standing notation for $L(p_c + \varepsilon(r))$ and $S_{\ell}(z) = S_{\ell(r(z))}(z) = S_{\ell(\|z\|)}(z)$.  Then for any $z$, 
\begin{equation}
\tilde{\mathbb P}_w(\{z \leadsto \infty\}) 
\geq c_1 \tilde{\mathbb P}_w(\{z \leadsto \partial S_{\ell}(z)\}).
\end{equation}
Similarly, if $r(z) < N - \ell(r(z))$ then
\begin{equation}
\label{cPhi}
\tilde{\mathbb P}_w(\{z~ \mathop{_{\leadsto}^{\thinspace S_N}}~ 0\})
\geq c_2 \tilde{\mathbb P}_w(\{z \leadsto \partial S_{\ell}(z)\}).
\end{equation}
In the above, $c_1$ and $c_2$ are constants of order unity independent of 
$z$.  
\end{lemma}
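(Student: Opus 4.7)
The plan is to adapt the proof of Eq.~(\ref{theta}) to the inhomogeneous setting. Write $\bar p_z = p_c + \varepsilon(r(z))$, so that $\ell(r(z)) = L(\bar p_z) \asymp r(z)^w$. The crucial fact is that because $w < 1$ we have $\ell(r(z)) \ll r(z)$; hence, for any $y$ with $r(y)$ within a constant factor of $r(z)$, monotonicity of $\alpha$ gives $p(y) - p_c \asymp \varepsilon(r(z))$ and $L(p_c + \varepsilon(r(y))) \asymp \ell(r(z))$. All the comparisons below proceed by stochastic domination with a homogeneous model at the minimum of $p(\cdot)$ over the relevant region.

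For the first inequality I would start from $\{z \leadsto \partial S_\ell(z)\}$ and extend to infinity by requiring an occupied circuit in each concentric annulus $S_{3^{k+1}\ell(r(z))}(z) \setminus S_{3^k \ell(r(z))}(z)$, $k \geq 0$, around $z$. Each circuit is forced by the simultaneous occupied crossings of four overlapping $R \times 3R$ rectangles placed around the annulus ($R = 3^k \ell(r(z))$). By stochastic domination with the homogeneous model at the minimum density on the annulus, together with the tripling RSW amplification $1 - c\delta \mapsto 1 - c\delta^3$: when $3^{k+1}\ell(r(z)) \leq r(z)/2$, the minimum density is comparable to $\bar p_z$ with correlation length $\asymp \ell(r(z))$, yielding circuit probability $\geq 1 - c\delta^{3^k}$; when $R \gtrsim r(z)$, the minimum density is $\geq p_c + \varepsilon(3R)$ with correlation length $\asymp R^w \ll R$, yielding $\geq 1 - c\exp(-c' R^{1-w})$. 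These failure probabilities are summable in $k$, so by FKG the probability of all circuits is at least some $c_1 > 0$ uniform in $z$, and gluing any such circuit family to the initial connection $z \leadsto \partial S_\ell(z)$ produces an infinite occupied path from $z$.

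For the second inequality I run the same scheme around $z$, truncated at the largest annulus fitting inside $S_N$, which yields a macroscopic piece of $z$'s cluster in $S_N$. In parallel, inside $S_N$ the inhomogeneous measure stochastically dominates the homogeneous measure at $p_c + \varepsilon(N)$, whose correlation length $\asymp N^w$ is much smaller than $N$ since $w < 1$. Standard uniqueness-of-the-crossing-cluster results for supercritical percolation at scales well above the correlation length then show that, with probability bounded below, $z$'s macroscopic piece lies in the unique crossing cluster of $S_N$; this cluster also absorbs $0$ because the density near the origin is close to $1$. A final FKG-type gluing (through rectangle crossings centered near $0$, of scale $\asymp \ell(r(0))$) connects them by a path confined to $S_N$; the hypothesis $r(z) + \ell(r(z)) < N$ is exactly what accommodates $S_\ell(z)$ and the first annular circuits inside $S_N$.

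The main obstacle is the scale bookkeeping for the circuit probabilities: cleanly separating the ``essentially homogeneous'' regime $3^k\ell(r(z)) \ll r(z)$, where the standard RSW tripling amplification applies verbatim, from the ``inhomogeneous tail'' $3^k\ell(r(z)) \gtrsim r(z)$, where summability of failure probabilities instead leans on $\xi(p_c + \varepsilon(R)) = R^w \ll R$. For the second statement, the additional technical point is to verify that uniqueness of the crossing cluster in $S_N$ (for the dominating supercritical homogeneous model) can be paired with the near-origin gluing in such a way that the resulting path stays inside $S_N$ regardless of how close $r(z)$ is to $N$.
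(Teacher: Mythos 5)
Your proof of the first inequality is essentially sound and amounts to a legitimate reorganization of the paper's argument: you build concentric annuli around $z$ at scales $3^k\ell(r(z))$ and control each one by stochastic domination at the minimum density, splitting into the regime $3^k\ell(r(z))\lesssim r(z)$ (failure $\lesssim\delta^{3^k}$ by the tripling amplification) and the regime $R=3^k\ell(r(z))\gtrsim r(z)$ (failure $\lesssim e^{-c R^{1-w}}$ since the local $L$ is $\asymp R^w\ll R$); the paper instead marches through shells $S_{2r}\setminus S_r$ centered at the origin and runs a $\times 3$ rescaling inside each shell. Two details you gloss over: occupied circuits in disjoint nested annuli are not connected to one another, and the arm $\{z\leadsto\partial S_\ell(z)\}$ stops at the inner boundary of your first annulus and need not meet a circuit lying strictly inside that annulus. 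One needs cross-scale overlapping rectangles (or radial connector crossings), and for the seed the paper uses the ring event $\mathcal A_\ell(z)$ separating $\partial S_\ell(z)$ from $S_{\ell(r(z))/3}(z)$ \emph{inside} $S_\ell(z)$, which any arm from $z$ to $\partial S_\ell(z)$ is forced to intersect. These are routine repairs.

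The second inequality is where there is a genuine gap, and it sits exactly at the point you defer as a ``technical point to verify.'' The hypothesis $r(z)<N-\ell(r(z))$ guarantees only that $S_\ell(z)\subset S_N$; it leaves no room for even a single annulus $S_{3\ell}(z)\setminus S_\ell(z)$. So when $z$ is within distance $3\ell(r(z))$ of $\partial S_N$ your truncated annulus scheme produces nothing, there is no ``macroscopic piece'' of $z$'s cluster, and the argument collapses precisely in the regime the hypothesis is designed to cover. In addition, ``the crossing cluster of $S_N$ is unique and contains both pieces'' is not an increasing event, so it cannot be pushed through stochastic domination and FKG as stated; it would have to be re-expressed as a positive-probability scaffold of occupied crossings, and near $\partial S_N$ that scaffold must be built entirely on the origin side of $z$. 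The paper's resolution is to work strictly \emph{inward}: attach the ring $\mathcal A_\ell(z)$ (which lives inside $S_\ell(z)$) to a $3\ell\times\ell$ crossing pointed toward the origin and run the $\times 3$ rescaling program in the direction of decreasing $r$, where $p(\cdot)$ only increases so no further estimate is needed; continue to $\partial S_{r/2}(0)$, close up with an occupied ring in $S_r(0)\setminus S_{r/2}(0)$ (probability $1-o(1)$ by the a priori bounds of Eq.~(\ref{apb})) and with the order-unity event $\{0\leadsto\partial S_r(0)\}$. By construction this path never leaves $S_{r(z)+\ell(r(z))}(0)\subset S_N$, which is how the hypothesis is actually used.
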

\begin{remark}
Since the above are supplemented with the obvious complementary bounds, the event 
$\{z \leadsto \partial S_{\ell}(z)\}$
is, essentially, necessary and sufficient for $z$ to join the relevant 
large scale IIIC.  This is the sort of result that Kesten established in the uniform system and, in fact, analogous statements are anticipated for all low--dimensional critical systems. Note also that
\begin{equation}
\label{ell}
\ell(r) = L(p_c + \varepsilon(r)) \asymp \xi(p_c + \varepsilon(r)) = r^w \: (\ll r).
\end{equation}
\end{remark}
\begin{proof}
We will establish the above for all $r$ sufficiently large but it is remarked that just how large is sufficient may depend on $w$. 
Let us start with the first case; here, for various reasons, it is worthwhile to know that the connection to infinity can be achieved by moving {\it outward} from the immediate vicinity of the point  $z$.   Consider the event $\mathcal A_{\ell}(z)$ that an occupied ring separates 
$\partial S_{\ell}(z)$ from $S_{\frac 13 \ell(r(z))}(z)$.  Once $\mathcal A_{\ell}(z)$ has occurred, 
with a few more parallelogram crossings, the separating circuit can be attached to a crossing of a $3\ell(r) \times \ell(r)$ parallelogram that is heading, more or less, in a direction away from the origin.  We further intersect this with a few more crossings on a few more scales -- each scale 3 times the previous one.  The number of times we must do this, which is on the order of just a few and not dependent on $r$ will be made precise momentarily; 
the relevant crossings are depicted in Figure 1.  
\begin{figure}[t]
\vspace{-3mm}
\begin{center}
\includegraphics[width=0.8 \textwidth]{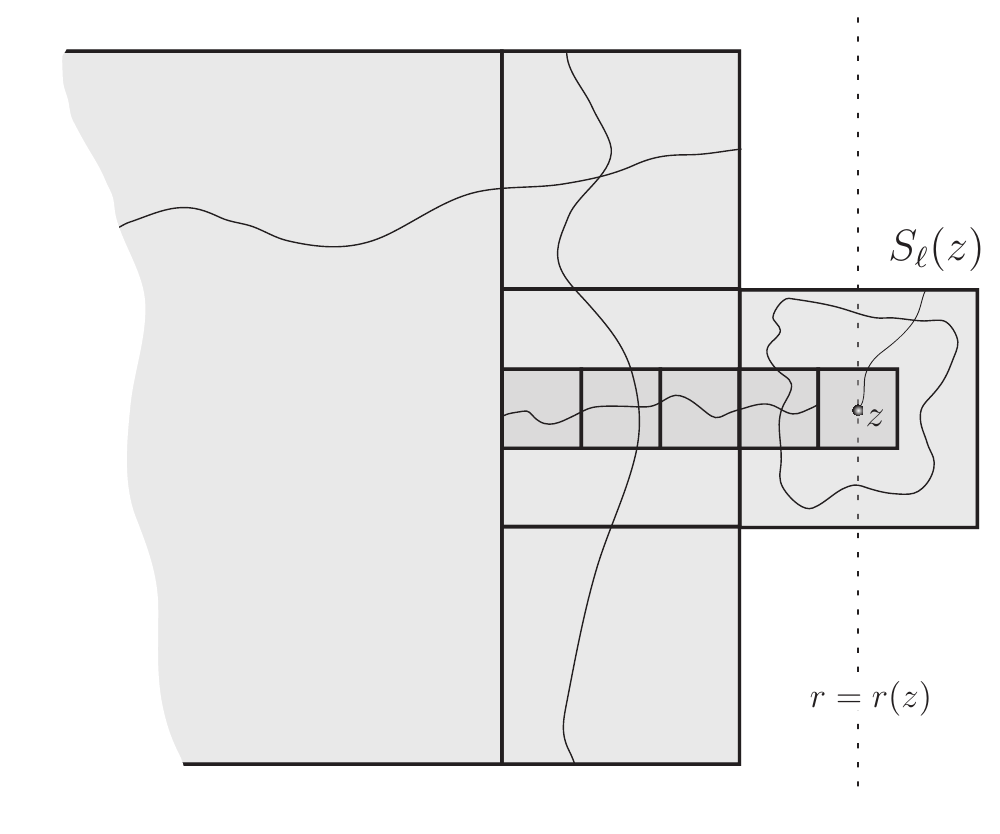}
\vspace{-5mm}
\end{center}
\caption{\footnotesize{
The event $\mathcal A_{\ell}(z)$ and a few subsidiary crossings which serve to attach $z$ to a cluster with diameter moderately larger than the local correlation length.  This cluster is, in turn, easily attached to points twice as far from the origin as $z$ and, ultimately, to infinity.
}}
\vspace{-3mm}
\end{figure}
Denoting the intersection of the annular event and the crossing events alluded to 
by $\mathcal B_\ell$ we have, by FKG,
\begin{equation}
\label{cost}
\tilde{\mathbb P}_w(\{ z\leadsto \partial S_{\ell}
(z) \}\cap \mathcal B_\ell)
\geq 
\tilde{\mathbb P}_w(\{ z\leadsto \partial S_{\ell}
(z) \})\tilde{\mathbb P}_w(\mathcal B_\ell) 
\geq B\tilde{\mathbb P}_w(\{ z\leadsto \partial S_{\ell}
(z) \})
\end{equation}
where $B$ is the probability of $\mathcal B_\ell$ at $p = p_c$.  We remind the reader that this is a uniformly positive constant (obtained by ``Russo--Seymour--Welsh theory'' and a few more applications of the FKG inequality) that does not depend on the particular scale where the action is taking place.  

Now, consider the situation at a distance $2r$ from the origin. Here, by Eq.(\ref{p(r)}) (the definition of $p(z)$), the local correlation length has grown to $2^w$ its size at the distance $r$.  Let us estimate the finite--size correlation length.  First we let $c_3$ and $c_4$ denote the constants by which the two correlation lengths may be compared:
\begin{equation}
c_3 L(p)  \leq   \xi(p)  \leq   c_4 L(p).
\end{equation}
Then, it is seen that $L(p_c + \varepsilon(2r)) \leq 2^{w}c_4c_3^{-1}L(p_c + \varepsilon(r)) = 2^{w}c_4c_3^{-1} \ell(r)$, and it is clear that everywhere in the annular region $S_{2r}(0)\setminus S_{r}(0)$,
the effective finite--size scaling correlation length is going to be uniformly smaller.  The constant $2^{w}c_4c_3^{-1}$ determines the scale of our initial cluster (which, we recall, is attached to the annular ring which in turn is connected to $z$).  Having achieved this scale we are beyond the correlation length as defined by the distance $2r$.  Using $p(2r)$ as a bound for the density in the annular region, it is not of much cost to connect this cluster out to $\partial S_{2r}(0)$.   This may be done, e.g.~by a standard 
``rectangle rescaling program'' -- constructing overlapping crossings the $k^{\text{th}}$ of which has probability in excess of 
$1-c\delta^{3^k}$ and whose scale is $3^k$ times that of the original aggregation. Note however, that we have to have taken $r$ large enough so that $2^{w}c_4c_3^{-1} \ell(r) \leq r$.

We have thus hooked the point $z$ to a cluster that connects 
$\partial S_{r}(0)$ to $\partial S_{2r}(0)$ at an additional probabilistic cost, beyond what is in Eq.(\ref{cost}), of no more than $\prod_k(1-c\delta^{3^k}) > 0$ -- again using repeatedly the FKG inequality.  The scale $r$ cluster can now be directed to infinity by straightforward arguments (of a similar nature) which may be directly taken from \cite{CCD} Theorem 2.  

The second bound, Eq.(\ref{cPhi}), is proved in a similar fashion -- actually easier because, \ae sthetics aside, we are {\it forced} to work inwards.  The first few steps are identical: assuming that $\{z \leadsto \partial S_\ell(z)\}$ has occurred, 
we use the event $\mathcal A_\ell(z)$ and some more crossings to hook $z$ up to a $3\ell(r)\times \ell(r)$ crossing -- this time headed in the general direction of coordinate decrease. But now, agreeing to always head inwards, we may do a $\times 3$ rescaling program without apology since $p(z)$ is only getting bigger.  Thus, we continue till we reach the boundary of $S_{r/2}(0)$, again at a cost of no more than $\prod_k(1-c\delta^{3^k}) > 0$.  With probability that is (stretched exponentially) close to one there is an occupied ring in 
$S_{r}(0)\setminus S_{r/2}(0)$; this may be obtained by summing Eq. (\ref{apb}) over both boundaries.   Finally, with non--zero probability, the event $\{0 \leadsto \partial S_r(0)\}$ occurs and it is clear that the intersection of all these events produces the event 
$\{z \negthinspace \mathop{_{\ ^{\leadsto}\ }^{\thinspace S_N}} \negthinspace 0\}$.
 As before, we have made repeated use of FKG and it is noted that
the probabilities of all the relevant events save
$\{z\leadsto \partial S_{\ell}(z)\}$
are of order unity independent of $z$.
\end{proof}

\noindent {\it Proof of Theorem \ref{main}}.~
The above lemma proves almost completely the portion of Theorem \ref{main} which concerns expectations.  Indeed, for a uniform system,
\begin{equation}
P_{\infty}(p)  \asymp \mathbb \pi(L(p))
\end{equation}
by Eq.(\ref{KClassic}), so the summand in Eq.(\ref{I}) may be replaced by $\pi(r^w(z))$ (as $\ell(r) \asymp r^w$ by Eq.(\ref{ell})), and it is easily seen that $\pi(r^w(z)) \asymp \tilde{\mathbb P}_w(\{z\leadsto \partial S_{\ell}(z)\})$:  indeed, as $z^\prime$ varies throughout $S_\ell(z)$, the local correlation length varies by a fractional amount which is only of the order $[r(z)]^{-(1-w)}$.  So, we may as well estimate by the largest value of $p$ within $S_\ell(z)$ and use the associated slightly smaller $L$.  But then, using bounds as in Eq.(\ref{E}) and Eq.(\ref{KClassic}), we get that 
$I_N \asymp \sum_{z\in S_N}\tilde{\mathbb P}_w(\{z \leadsto \partial S_{\ell}(z)\})$, which is our asymptotic expression for $\tilde{\mathbb E}_w(\Psi_N)$.

Before we dispense with $\tilde{\mathbb E}_w(\Phi_N)$ let us first verify the (asymptotic) evaluation of the quantity $I_N$.  We already have that 

\begin{equation}
 \label{DOVE}
I_N \asymp \sum_{z \in S_N}
 \pi(r^w(z)).
\end{equation}
Let us take a logarithmic division of $S_N$:  define $k=k(N)$ so that $2^k < N \leq 2^{k+1}$, then 
\begin{equation}
 \label{DIV}
I_N \asymp \sum_{j\leq k} (2^j)^2\pi(2^{jw}) + \mathcal E(k)
\end{equation}
where $\mathcal E(k)$ is no more than the order of $N^2\pi(N^w)$. In the above, we have used Eq.(\ref{E}) on more than one occasion.  Obviously, the purported principal term is at least of this order so there is no further need to consider $\mathcal E$.  We pull out the leading term in the sum:
\begin{equation}
\sum_{j\leq k} (2^j)^2\pi(2^{jw}) \asymp 2^{2k}\pi(2^{kw})
\sum_{j \leq k}2^{2(j-k)}\frac{\pi(2^{jw})}{\pi(2^{kw})}
\end{equation}
Now we use the fact that $\pi(2^{wk})/\pi(2^{wj}) \asymp \pi(2^{wj}|2^{wk})$ (using Eq.(\ref{F})) so that the coefficient of $2^{2k}\pi(2^{kw})$ (which is also at least as {\it large} as the order of unity because of the last term in the sum) is no more than
\begin{equation}
\label{See}
\tilde{c}_5  
=  \sum_{q = 0}^\infty
2^{-q(2-w\mu)} < \infty
\end{equation}
since $\mu$ is certainly less than two.
It is obvious given Eq.(\ref{DOVE}) for $I_N$ that $\tilde {\mathbb E}(\Phi_N)$ is (asymptotically) bounded above by $I_N$ and below by $I_{\frac{N}{2}}$ which 
by now, are seen to be comparable to each other.

Let us turn now to the variance bound. We first note that we can write $\Psi_N = \sum_{x \in S_N} \mathbb{I}_{\{x \leadsto \infty\}}$, so that
\begin{align*}
\tilde{\mathbb V}_w(\Psi_N) & = \sum_{x,y \in S_N} \Bigg[\tilde{\mathbb P}_w(\{x \leadsto \infty\}, \{y \leadsto \infty\}) - \tilde{\mathbb P}_w(\{x \leadsto \infty\}) \tilde{\mathbb P}_w(\{y \leadsto \infty\})\Bigg]\\
& = \sum_{x,y \in S_N} \Bigg[\tilde{\mathbb P}_w(F_x \cap F_y) - \tilde{\mathbb P}_w(F_x) \tilde{\mathbb P}_w(F_y)\Bigg]
\end{align*}
where we have used the notation $F_x = \{x \leadsto \infty \}$. Now recall that $\ell(r) \asymp r^w$ (Eq.~(\ref{ell})): as $w<1$, we can find some $\epsilon>0$ such that $w+\epsilon<1$. We introduce the enhanced length $l(r) = \ell(r)r^\epsilon$   (which is still $\ll r$) and 
as above, we abbreviate $S_{l(\|x\|)}(x)$ by $S_{l}(x)$.  We denote by 
$F'_x$ the event $\{x \leadsto \partial S_{l}(x) \}$. 
It is not hard to check that there is a $b>0$ (independent of $x$) such that for $\|x\|$ sufficiently large,
\begin{equation}
\label{NH}
\tilde{\mathbb P}_w(F_x \Delta F'_x) \leq e^{-\|x\|^b}.
\end{equation}
We deduce, for $n$ some small power of $N$, that 
\begin{align*}
\tilde{\mathbb V}_w(\Psi_N) \leq 
  17C_0^2 n^4~
+&~~C_1 N^4 e^{-n^{b}}~ + ~2 
\negthickspace  \negthickspace
\negthickspace  \negthickspace
 \sum_{x\in S_{n}, y \in S_N \setminus S_{3n}}\Bigg[ \tilde{\mathbb P}_w(F_x \cap F'_y) - \tilde{\mathbb P}_w(F_x) \tilde{\mathbb P}_w(F'_y) \Bigg] \\
& + \sum_{x,y \in S_N \setminus S_{n}}\Bigg[ \tilde{\mathbb P}_w(F'_x \cap F'_y) - \tilde{\mathbb P}_w(F'_x) \tilde{\mathbb P}_w(F'_y) \Bigg].
\end{align*}
The first term serves to estimate the terms in which $\{x\in S_n, y \in S_{3n}~ + \longleftrightarrow \}$ where, as we recall, $C_0$ is the constant that figures into the volume of a box, and the reader is invited to verify the factor of 17.  Whenever $x$ is in $S_{n}^c$, we replace $F_x$ with $F'_x$ and similarly for $y$; the error incurred is accounted for in the second term (and we have assumed that $n$ is large enough so that the bound in Eq.(\ref{NH}) is safely in effect).  The last two terms are self--explanatory and will be dispensed with below.  

Let us start with the first sum.  For $y\in S_{3n}^c$ and $x \in S_{n}$, it is observed that, for $n$ large enough, $S_l(y)$ is disjoint from $S_n$.  Suppose that an occupied circuit surrounding $S_l(y)$ separates it from $S_n$.  Now the event $F'_y$ depends only on the configuration inside 
$S_{l}(y)$ while (conditioning on the innermost such ring) the event $F_x$ depends only on the configuration outside and, perhaps, including the ring.  I.e.~given such a ring, the events $F_x$ and $F'_y$ are conditionally independent.  The probability of $F'_y$ is unchanged while 
the probability of $F_x$ {\it and} the ring event is bounded above by $\tilde{\mathbb P}_w(F_x)$ alone.
Thus we learn for 
$y\in S_{3n}^c$ and $x \in S_{n}$
that 
\begin{align}
\label{NR}
\tilde{\mathbb P}_w(F_x\cap F'_y) &-
\tilde{\mathbb P}_w(F_x)\tilde{\mathbb P}_w(F'_y)
\notag  \\ 
& \leq \tilde{\mathbb P}_w(\{ \text{no occupied circuit separates } S_n \text{ from } S_l(y) \}).
\end{align}
The right--hand side of Eq.(\ref{NR}) is bounded by another term of the order 
$e^{-n^{b}}$ and we may thus absorb the entire first sum into the second error term at the expense of shifting the index of the constant.  

We turn attention to the final term in the above written bound on the variance.
If $x$ and $y$ are distant enough, $S_{l}(x)$ and $S_{l}(y)$ are disjoint, and the events $F'_x$ and $F'_y$ are independent. Now note that $l(\|x\|) \geq l(\|y\|)$ if $\|x\| \geq \|y\|$, so that $S_{l}(x) \cap S_{l}(y) = \varnothing$ if $\|x\| \geq \|y\|$ and $y \in S_{3l}^c(x)$. Hence,
\begin{align*}
\sum_{x,y \in S_N \setminus S_{n}} \Bigg[\tilde{\mathbb P}_w(F'_x \cap F'_y) - \tilde{\mathbb P}_w(F'_x) \tilde{\mathbb P}_w(F'_y)\Bigg] & \\
& \hspace{-3cm} \leq 2  
\negthickspace
\negthickspace
\negthickspace
\negthickspace
 \sum_{x,y \in S_N \setminus S_{n}, \|x\| \geq \|y\|} \Bigg[\tilde{\mathbb P}_w(F'_x \cap F'_y) - \tilde{\mathbb P}_w(F'_x) \tilde{\mathbb P}_w(F'_y)\Bigg]\\
& \hspace{-3cm} \leq 2 \sum_{x \in S_N \setminus S_{n}} \sum_{y \in S_{3l}(x) \cap S_n^c} \Bigg[\tilde{\mathbb P}_w(F'_x \cap F'_y) - \tilde{\mathbb P}_w(F'_x) \tilde{\mathbb P}_w(F'_y)\Bigg]
\end{align*}

We now have to estimate, for a site $x \in S_N \setminus S_{n}$, the sum
\begin{equation}
\Sigma_w(x) = \sum_{y \in S_{3l}(x) \cap S_n^c} \tilde{\mathbb P}_w(x \leadsto \partial S_{l}(x), y \leadsto \partial S_{l}(y))
\end{equation}
Note that for $y$ inside $S_{3l}(x) \cap S_n^c$, the size $l(\|y\|)$ of the associated box does not vary too much and certainly (since $\|y\|$ is already larger than $n$) always satisfies $l(\|y\|) \geq \ell(\|x\|)$.  
Thus we have
\begin{equation}
\Sigma_w(x) \leq \sum_{y \in S_{3l}(x)} \tilde{\mathbb P}_w(x \leadsto \partial S_{\ell(\|x\|)}(x), y \leadsto \partial S_{\ell(\|x\|)}(y))
\end{equation}
We can then proceed by summing over concentric annuli centered on $x$ cutting down even further on what we require in accord with $\|x-y\|$: take $k = k(x)$ such that $2^k < \ell(\|x\|) \leq 2^{k+1}$. If $y$ is outside of $S_{2^{k+1}}(x)$, the two boxes $S_{2^k}(x)$ and $S_{2^k}(y)$ are disjoint.  Hence, for these cases,
\begin{align}
\tilde{\mathbb P}_w(x \leadsto \partial S_{\ell(\|x\|)}(x),& \thinspace y \leadsto \partial S_{\ell(\|x\|)}(y)) 
\notag  \\
\label{far}
& \leq \tilde{\mathbb P}_w(x \leadsto \partial S_{2^k}(x)) \tilde{\mathbb P}_w(y \leadsto \partial S_{2^k}(y)) \asymp \pi^2(2^k)
\end{align}
and the number of such terms does not exceed the volume of
$S_{3l}(x)$.  Thus, the total contribution from these well--separated terms is bounded by $C_3l^2(\|x\|)\pi^2(\ell(\|x\|))$ where $C_3$ is a constant not dissimilar from $C_0$.

Now if $y \in S_{2^{j+1}}(x) \setminus S_{2^j}(x)$ with $j \leq k-3$, we have by independence
\begin{align*}
\tilde{\mathbb P}_w(x \leadsto \partial S_{\ell(\|x\|)}(x), y \leadsto \partial S_{\ell(\|x\|)}(y)) & \\
& \hspace{-3cm} \leq \tilde{\mathbb P}_w(x \leadsto \partial S_{2^{j-1}}(x)) \tilde{\mathbb P}_w(y \leadsto \partial S_{2^{j-1}}(y)) \tilde{\mathbb P}_w(\partial S_{2^{j+2}}(x) \leadsto \partial S_{2^k}(x))\\
& \hspace{-3cm} \leq C_4 \pi^2(2^{j-1}) \pi(2^{j+2} | 2^k)\\
& \hspace{-3cm} \leq C_5 \pi(2^j)\pi(2^k).
\end{align*}
using once again Eqs.(\ref{E}) and (\ref{F}). If $j \geq k-2$, we just drop the last term $\tilde{\mathbb P}_w(\partial S_{2^{j+2}}(x) \leadsto \partial S_{2^k}(x))$ in the first inequality: since in this case $\pi(2^j) \asymp \pi(2^k)$, the final inequality still holds. Hence, we must sum $\sum_{j \leq k}(2^{j})^2\pi(2^j)\pi(2^k)$.  This is identical to the previous argument:  pulling out an overall factor of $[2^k\pi(2^k)]^2$, the resulting summand may be expressed as $[\pi(2^j\mid2^k)2^{2(k-j)}]^{-1}$, and if we use the bound in 
Eq.(\ref{alpha}) with $\mu < 2$, we see
\begin{equation}
\sum_{j\leq k}2^{2j}\pi(2^j)\pi(2^k) \leq C_6 [2^k\pi(2^k)]^2.
\end{equation}
This is somewhat smaller than the contribution from the well--separated terms (Eq.(\ref{far})) so, overall,  
\begin{equation}
\Sigma_w(x)  \leq   C_7 l^2(\|x\|)\pi^2(\ell(\|x\|)).
\end{equation}

We finally sum on $x$ to conclude
\begin{align*}
\tilde{\mathbb V}_w(\Psi_N) & \leq 
17C_0^2 n^4 + C_2 N^4 e^{-n^{b }} + 2 \negthickspace  \sum_{x \in S_N \setminus S_{n}}
 \negthickspace  \Sigma_w(x) \\
 &  \leq 17C_0^2 n^4 + C_2 N^4 e^{-n^{b }}
 +C_7 N^{2\epsilon}\sum_{j = 1}^{k(N)}2^{2j}2^{2wj}\pi^2(2^{jw}).
\end{align*}
In the above, all indexed constants are numbers which are uniformly of order unity.
As in previous arguments, we may bound the sum by a constant times 
$N^{2 + 2w}\pi^2(N^w)$ and, finally, we choose $n$ a small enough power of $N$ so that $n^4$ is relatively negligible -- which will still easily diminishes the other ``error term''.
Recalling that $I_N^2 \approx N^4 \pi^2(N^w)$ -- and that $w + \epsilon < 1$ -- we have obtained the desired statement about $\tilde{\mathbb V}_w (\Psi_N)$.

Concerning $\Phi_N$, although we have to be a bit more cautious, the proof remains essentially the same. Here we can write
\begin{align*}
 \tilde{\mathbb V}_w(\Phi_N \mid\{0 \leadsto \infty\}) 
 = ~~~~~~~~~~~~~~~~~~~~~~~~~~~~~~~~
 ~~~~~~~~~~~~~~~~~~~~~~~~~~~~~~~~~~~~~~~~~~~~~~~~~~~ \\
 \sum_{x,y \in S_N} \Bigg[ \tilde{\mathbb P}_w(\{x \leadsto 0, y \leadsto 0 \}|\{ 0 \leadsto \infty\}) - \tilde{\mathbb P}_w(\{x \leadsto 0\} |\{ 0 \leadsto \infty\}) \tilde{\mathbb P}_w(\{y \leadsto 0 \}| \{0 \leadsto \infty\}) \Bigg]\\
 = \sum_{x,y \in S_N} \Bigg[ \frac{\tilde{\mathbb P}_w(F_x \cap F_y \cap \{0 \leadsto \infty\})}{\tilde{\mathbb P}_w(\{0 \leadsto \infty\})} - \frac{\tilde{\mathbb P}_w(F_x \cap \{0 \leadsto \infty\}) \tilde{\mathbb P}_w(F_y \cap \{0 \leadsto \infty\})}{\tilde{\mathbb P}_w(\{0 \leadsto \infty\})^2} \Bigg]
\end{align*}
We again cut out a central portion at the cost of the order $n^4$ and we are left with two principal contributors the first of which is given by (twice) the sum with $x\in S_n$ and $y\in S_{3n}^c$.  Here, using another argument involving a separating ring, the positive term is bounded as follows:
\begin{equation}
\tilde{\mathbb P}_w(F_x \cap F_y\mid\{0 \leadsto \infty\}) \leq 
\tilde{\mathbb P}_w(F_x \mid\{0 \leadsto \infty\})
\tilde{\mathbb P}_w(F'_y) + \mathcal N_R
\end{equation}
where $\mathcal N_R$ is the ``no ring'' event described in Eq.(\ref{NR}).  Meanwhile, 
$$
\tilde{\mathbb P}_w(F_y \mid\{0 \leadsto \infty\})
\geq  \tilde{\mathbb P}_w(F_y)
$$
so we are left with 
$\tilde{\mathbb P}_w(F_x \mid\{0 \leadsto \infty\}) \tilde{\mathbb P}_w(F'_y\Delta F_y )$ plus the $\mathcal N_R$ term both of which are of the order $e^{-n^{b}}$.  

We are left with the principal term and first off (at small cost) we replace the events $F_x$ and $F_y$ by the events $F'_x$ and $F'_y$.  Here in addition we will replace $\{0 \leadsto \infty\}$
by the event
\begin{equation}
F^{(x,y)}_0  =   \{0 \leadsto \infty \text{ outside of $S_{l}(x) \cup S_{l}(y)$} \}
\ \ (= \{ 0 \mathop
{_{\ ^{~~~~-\negthinspace-\negthinspace-\negthinspace\leadsto}\ }
^{\ [S_{l}(x) \cup S_{l}(y)]^c}} \negthinspace \infty \}).
\end{equation}
It is not hard to see that the two events are very close.  Indeed while ostensibly 
$F_0^{(x,y)} \supset \{0\leadsto \infty \}$, in the event that $S_{l}(x)$ and 
$S_l(y)$ are both surrounded by occupied circuits which separate these boxes from the origin, the conditional probability is larger.  But since we are well away from the origin these sorts of separating rings occur with probability close to one and we get an upper bound similar to that of Eq.(\ref{NH}) for 
$\tilde{\mathbb P}_w(F_0^{(x,y)} \Delta  \{0\leadsto \infty \})$. 
The remainder  of the proof is essentially identical.
\qed

\section{A Sharp Transition}

To treat the marginal case, we take
\begin{equation}
\label{loglog}
p(\negthinspace z) := p_c + \varepsilon(r) = p_c + \alpha( r /\kappa\log \log(r))
\end{equation}
with $\kappa$ a constant and it is assumed that $r$ is large enough so that all quantities are positive (otherwise, we set $p = 1$).  We denote by $\tilde{\mathbb P}_{1,\kappa}$ the associated inhomogeneous probability measure. Note that this gives 
$\xi(p_c + \varepsilon(r))  = r / \kappa\log\log(r)$ which is hardly distinguishable from linear in $r$.  Nevertheless, we will prove

\begin{theorem} 

\label{final}
For the 2D inhomogeneous percolation  models defined via Eq.(\ref{loglog}),
there is a critical value $\kappa_c \in (0,+\infty)$ such that for 
$\kappa > \kappa_c$ there exists $\tilde{\mathbb P}_{1,\kappa}$-- a.s.~an infinite cluster, while for $\kappa < \kappa_c$ there is $\tilde{\mathbb P}_{1,\kappa}$-- a.s.~no infinite cluster.
\end{theorem}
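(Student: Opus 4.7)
The plan is to analyze the problem on disjoint annular regions $A_k = S_{2^{k+1}} \setminus S_{2^k}$ and apply Borel--Cantelli on both sides of the transition. Since $p(r)$ is monotone in $\kappa$, the standard coupling makes the existence of an infinite cluster monotone in $\kappa$, so it suffices to exhibit some $\kappa_+ < \infty$ above which there is percolation and some $\kappa_- > 0$ below which there is not; a unique $\kappa_c \in [\kappa_-, \kappa_+]$ then emerges from monotonicity. Within $A_k$, using $\log\log 2^k \sim \log k$ together with Kesten's equivalence $L \asymp \xi$, the local correlation length is
\begin{equation*}
L(p(z)) \asymp 2^k / (\kappa \log k),
\end{equation*}
so the width of $A_k$ amounts to roughly $\kappa \log k$ correlation lengths. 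This is the single quantity governing both directions; moreover the variation of $p(z)$ across $A_k$ changes $L$ by only a factor $1 + O(1/k)$, which is innocuous.

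For the positive direction, the probability that an $L(p) \times 3L(p)$ parallelogram in $A_k$ fails to be crossed in the short way by an occupied primal path is at most $C e^{-c \cdot 2^k/L(p)} = C k^{-c\kappa}$, by the standard supercritical exponential estimate (the bound used to derive Eq.~(\ref{theta})). Combining a few such crossings via FKG yields an occupied circuit in $A_k$ surrounding $S_{2^k}$ with probability at least $1 - C'k^{-c\kappa}$. For $c\kappa > 1$ the sum $\sum_k k^{-c\kappa}$ converges, and the first Borel--Cantelli lemma delivers such circuits in all but finitely many $A_k$ almost surely. A supplementary summable FKG argument (occupied crossings of thin buffer annuli between consecutive $A_k$) connects these nested circuits, producing an infinite cluster through the origin almost surely.

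For the negative direction, I would build a matching lower bound on the probability of a \emph{dual} circuit around $0$ inside $A_k$ -- this is the main obstacle. The idea is to pave a sub--annular strip of width $\sim L(p)$ inside $A_k$ by $M \asymp \kappa \log k$ overlapping $3{:}1$ parallelograms of side a small fixed fraction of $L(p)$; on such sub--$L$ scales, RSW (in the form of Eq.~(\ref{E}) together with the near-critical behavior discussed after Eq.~(\ref{theta})) applies and produces a uniform $q_0 > 0$ for the dual long-way crossing of each piece. By FKG, and the standard RSW gluing of overlaps, the intersection of all these dual crossings forces a dual circuit around $0$ inside $A_k$ with probability at least $q_0^{O(M)} = k^{-C'\kappa}$. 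The delicate point is that the paving scale must be taken strictly below $L(p)$ so the critical-like bounds apply uniformly across $A_k$, yet large enough that only $O(\kappa \log k)$ pieces suffice; this is what produces the genuine $k^{-C'\kappa}$ rate rather than a much smaller quantity. Since the events $\{\text{dual circuit in } A_k\}$ depend on disjoint sites they are independent, so for $C'\kappa<1$ the sum $\sum_k k^{-C'\kappa}$ diverges and the second Borel--Cantelli lemma produces infinitely many dual circuits around $0$ almost surely. Each such circuit separates $S_{2^k}$ from infinity, so simultaneously every lattice site is disconnected from infinity, proving the absence of any infinite cluster.
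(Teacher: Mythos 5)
Your proposal is correct and follows essentially the same route as the paper: both directions rest on the observation that the annulus at scale $2^k$ (resp.\ $3^k$) is about $\kappa\log k$ local correlation lengths wide, so that supercritical exponential estimates give occupied circuits with probability $1-k^{-c\kappa}$ (first Borel--Cantelli for large $\kappa$), while RSW plus FKG on blocks of scale $\lesssim L(p)$ gives dual circuits in disjoint annuli with probability $k^{-C\kappa}$ (independence and the second Borel--Cantelli lemma for small $\kappa$), monotonicity in $\kappa$ then supplying $\kappa_c$. Only cosmetic details differ: the paper glues crossings via the ``T'' construction of \cite{CCD} rather than nested circuits, and your claim that $L$ varies by a factor $1+O(1/k)$ across $A_k$ should really be a bounded constant (the radius doubles), which is all the argument needs.
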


\begin{proof}
By monotonicity, it suffices to prove that there exists a value of $\kappa$ for which the system percolates, and another value for which it does not percolate.  We start with the percolative part.  

Consider the crossing of any $3r \times r$ parallelogram that is situated so that the maximum distance form the origin is no more than $Mr$ with $M$ a (uniform) constant of order unity.  Within this parallelogram, the lowest value of $p$ estimates a uniform value for the density.  This in turn provides a finite--size scaling correlation length which is smaller than $r/[q_1\log\log r]$ for some constant, $q_1$, which is large if $\kappa$ is large.  By starting at {\it this} length scale, and instituting an $\times 3$ rescaling program till the scale of the $3r \times r$ is reached, it is seen that the probability of a crossing at the larger scale is at least $1 - Q_1\delta^{q_1\log\log r}$.  Here $Q_1$ is a constant of order unity -- perhaps small -- but independent of $r$ and $\kappa$.  Writing $\delta$ as an exponential this bounds the probability of crossing the parallelogram at scale $r$ below by 
$1 - Q_2/(\log r)^{q_2}$ where $Q_2$ is of order unity independent of $r$ and $\kappa$ and $q_2$ is large if $\kappa$ is large.    

We now consider a sequence of overlapping $3\times 1$ parallelograms at a sequence of scales with each scale thrice the previous one.  Here the sequence is such that the smallest scale is in the vicinity of the origin and the event of simultaneous crossings of all of them (or all but a finite number of them) implies the existence of an infinite cluster.  \negthinspace
$\negthinspace \footnote{~E.g. in the ``T'' construction in  \cite{CCD}, which the reader may wish to check, there are two rectangles at each scale; although one of them was $4\times 1$ this was only for \ae sthetic reasons and, in any case, the above mentioned bounds on crossing probabilities  are easily extended to parallelograms with any finite aspect ratio.}$
~If the scale of the $k^{\text {th}}$ rectangle is simply a constant times $3^k$, the $\tilde{\mathbb P}_{1,\kappa}$ probability of seeing all the crossings is bounded below by 
$$
g(\kappa) =
\prod _k \bigg[1 - \frac{Q_3}{k^{q_2}}\bigg]
$$
(with $Q_3$ another uniform constant) which is positive for all $\kappa$ large enough.  The quantity $g(\kappa)$ bounds the probability that the origin belongs to an infinite cluster, the a.s.~existence of an infinite cluster follows from an application of the Borel--Cantelli lemma.  It is remarked that by the consideration of large scale circuits -- which are present even at $p = p_c$ -- the infinite cluster is a.s.~unique.

\begin{figure}[t]
\vspace{-3mm}
\begin{center}
\includegraphics[width=0.8 \textwidth]{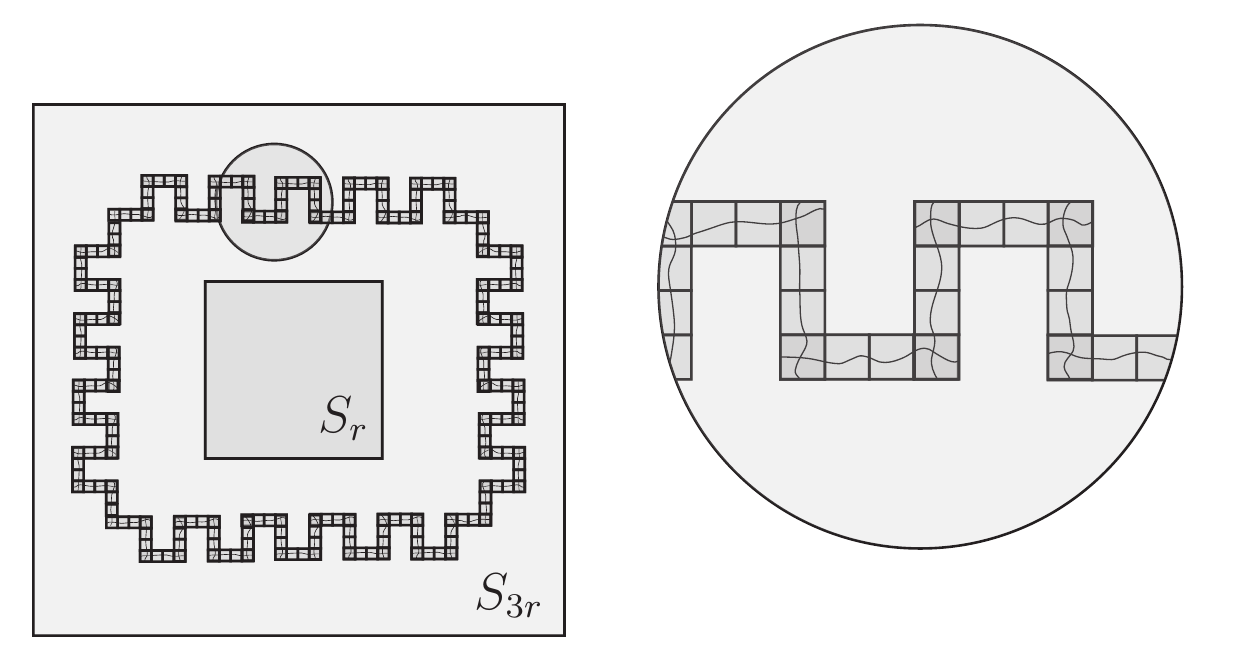}
\vspace{-5mm}
\end{center}
\caption{\footnotesize{
Construction of a dual circuit in the annulus
$S_{3r} \setminus S_r$ for the borderline case.  If $\kappa$ is small, the probability of these circuits tends to zero with a small power of $\log r$ and  
percolation is prevented.
}}
\vspace{-3mm}
\end{figure}

For the non--percolative result, when $\kappa$ is small, we shall consider events in the annular regions $S_{3r} \setminus S_r$.  Within this region, $L(p(z))$ is now uniformly larger than $r/[a_1 \log\log r]$ where $a_1$ is small if $\kappa$ is small.  This implies that the long--way crossings of 
$4\times 1$ parallelograms occur with probability of order unity.  These crossings may be stitched together, e.g.~in a square--wave fashion, to construct a dual circuit in the annulus; see Figure 2.  Using FKG, the probability of such a ring can be bounded below by $A_1/(\log r)^{a_2}$, where $a_2$ is small if $\kappa$ is small.  Once more looking at interlocking annuli at scales $\propto 3^k$, this translates into a probability $\propto k^{-a_3}$ where $a_3$ is small if $\kappa$ is small.  Divergence of 
$\sum_k k^{-a_3}$ implies the a.s.~presence of $\infty$-ly many of these dual circuits and, therefore, $\tilde{\mathbb P}_{1,\kappa}$--a.s.~no infinite cluster.
\end{proof}

\section{Appendix}

Here we provide the promised derivation that, in context of 2D percolation models of the sort described in Section 2, all correlation lengths are asymptotically equivalent.  As the reader will note, the key is already in the proof of Theorem \ref{final}.  

\medskip

\noindent \textit{Proof that} $L(p) \asymp \xi(p)$.~~~  Let us start by defining 
$R_{3,N}(p)$ to be the probability of a long--way crossing of a $3N\times N$ parallelogram and $D_{3,N}  = 1 - R_{3,N}$ the probability of the complimentary event, namely a short--way dual--crossing of this shape.  
First, it is claimed that
\begin{equation}
\lim_{N\to\infty}D_{3,N}^{\frac{1}{N}} = e^{-\frac{1}{\xi}}.
\end{equation}
Indeed, $D_{3,N} \leq  V_{1}N^{2}e^{-\frac{N}{\xi}}$ by the \textit{a priori} bounds discussed in Eq.(\ref{apb}) where $V_1$ is a uniform constant (equal to 9 on the square lattice).  On the other hand, we may obtain a lower bound for $D_{3,N}$ by just allowing the site at the center of the base to connect to its counterpart across the way.  While, ostensibly, this would allow for paths to ``leak out the ends'', it is not hard to show that the probability of such a huge lateral excursion is as small as $e^{-\frac{3}{2}\frac{N}{\xi}}$ so, for all intents and purposes, $D_{3,N} \gtrsim \tau^{\ast}_N$ which establishes the limit.  
Using the $3\times$ construction discussed at several points earlier in the text and using e.g. $\delta = e^{-1}$, we get that
\begin{equation}
R_{3,3^{k}L} \geq 1 - ce^{-3^{k}}.
\end{equation}
Thus, for some sequence of $N$'s, $D_{3,N} \leq ce^{-\frac{N}{L}}$ which implies $e^{-1/\xi} \leq  e^{-1/L}$.  
Now consider the probability of a hard--way dual crossing of a $4\times 1$ parallelogram of scale $L^{\prime}$ which is less than $L$ but, say, larger than $\frac{1}{2}L$.  This occurs with a probability of order unity independent of $p$ (by Russo-Seymour-Welsh theory) and, as was just done in the last proof, by stitching together the order of $N/L$ such rectangles the desired event is produced.  Thus we have $D_{3,N} \geq  e^{-\sigma\frac{N}{L}}$ for some constant $\sigma$ (which is uniform in $p$) and hence $e^{-1/\xi} \geq  e^{-\sigma/L}$.
\qed

\section*{Acknowledgments}
\noindent 
The authors are grateful to the IPAM institute at UCLA for their hospitality and support during the ``Random Shapes Conference" which was supported by the NSF under the grant~DMS-0439872.
  L.C. was supported by the NSF under the  grant~DMS-0306167.

\medskip

\end{document}